\begin{document}
\newtheorem{theorem}{Theorem}
\newtheorem{proposition}[theorem]{Proposition}
\newtheorem*{proposition*}{Proposition}
\newtheorem{lemma}[theorem]{Lemma}
\theoremstyle{definition}
\newtheorem{definition}[theorem]{Definition}
\newtheorem*{definition*}{Definition}
\newtheorem{example}{Example}
\theoremstyle{remark}
\newtheorem*{remark*}{Remark}

\newcommand{\Ll}{\operatorname{\mathcal{L}}}
\newcommand{\cntrct}{\hspace{2pt}\raisebox{1pt}{\text{$\lrcorner$}}\hspace{2pt}}
\newcommand{\End}{\operatorname{End}}
\newcommand{\Tr}{\operatorname{Tr}}
\newcommand{\al}{\alpha}
\newcommand{\be}{\beta}
\newcommand{\vol}{\operatorname{vol}}
\newcommand{\const}{\operatorname{\text{\sf const}}}
\newcommand{\og}{\overline{g}}
\newcommand{\oOmega}{\overline{\omega}}
\newcommand{\otheta}{\overline{\theta}}
\newcommand{\oT}{\overline{T}}

\title{Locally conformally K\"ahler manifolds with holomorphic Lee field}
\author{Andrei Moroianu}
\address{Andrei Moroianu, Laboratoire de Math\'ematiques d'Orsay, Univ.\ Paris-Sud, CNRS,
Universit\'e Paris-Saclay, 91405 Orsay, France}
\email{andrei.moroianu@math.cnrs.fr}
\author{Sergiu Moroianu}
\address{Sergiu Moroianu, Institutul de Matematic\u{a} al Academiei Rom\^{a}ne\\
P.O. Box 1-764\\RO-014700
Bu\-cha\-rest, Romania}
\email{moroianu@alum.mit.edu}
\author{Liviu Ornea}
\address{Liviu Ornea, University of Bucharest, Faculty of Mathematics\\14
Academiei str.\\70109 Bucharest, Romania\\and:
Institute of Mathematics ``Simion Stoilow'' of the Romanian
Academy\\
21 Calea Grivitei\\
010702-Bucharest, Romania}
\email{lornea@fmi.unibuc.ro, Liviu.Ornea@imar.ro}
\thanks{Sergiu Moroianu was partially supported by the CNCS - UEFISCDI grant PN-III-P4-ID-PCE-2016-0330. Liviu Ornea was partially supported by the CNCS - UEFISCDI grant PN-III-P4-ID-PCE-2016-0065}
\begin{abstract}
We prove that a compact lcK manifold with holomorphic Lee vector field is Vaisman provided that either the Lee field has constant norm or the metric is Gauduchon (i.e., the Lee field is divergence-free). We also give examples of compact lcK manifolds with holomorphic Lee vector field which are not Vaisman.
\end{abstract}

\subjclass[2010]{53C55}
\keywords{lcK structures, Vaisman manifolds, holomorphic Lee vector field.}
\maketitle

\section{Introduction}

Let $(M,J,g)$ be a Hermitian manifold of complex dimension $n\geq 2$. Let $\omega$ be its fundamental form defined as $\omega(\cdot,\cdot)=g(J\cdot, \cdot)$.

\begin{definition*}
A Hermitian manifold $(M,J,g)$ is called {\em locally conformally K\"ahler} (lcK) if there exists a closed 
$1$-form $\theta$ which satisfies
\[d\omega=\theta\wedge\omega.\]
If moreover $\theta$ is parallel with respect to the Levi-Civita connection of $g$, the manifold (and the metric itself) are called {\em Vaisman} \cite{va}.
\end{definition*}  

We refer to \cite{do} for definitions and main properties of lcK metrics. 

Among lcK manifolds, Vaisman ones are important because their topology is better understood. Consequently, characterizations of this subclass are always interesting.
Several sufficient conditions are known for a compact, non-K\"ahler  lcK manifold to be Vaisman: the Einstein-Weyl condition (\cite{pps}); the existence of a parallel vector field (\cite{mor}); the pluricanonical condition (\cite{mm}); homogeneity (\cite{gmo}); having potential and being embedded in a diagonal Hopf manifold (\cite{ov_lckpot}, \cite{ov_indam}); or being toric (\cite{is}, \cite{mmp}).  

It is known that on a Vaisman manifold, the Lee and anti-Lee vector fields $\theta^\sharp$ and $J\theta^\sharp$ are holomorphic and Killing. In this note, we discuss the implications of the Lee vector field being only holomorphic, and we add to the above list of sufficient conditions the following result:

\begin{theorem}\label{main} Let $(M,J,g,\theta)$ be a compact locally conformally K\"ahler manifold with holomorphic Lee field $\theta^\sharp$. Suppose that one of the following conditions is satisfied:
\begin{enumerate}
\item[(i)] The norm of the Lee form $\theta$ is constant, {\em or}
\item[(ii)] The metric $g$ is Gauduchon (which means by definition that the Lee form $\theta$ is co-closed with respect to $g$, see \cite[pp.\ 502]{gau}).
\end{enumerate}
Then $(M,J,g)$ is Vaisman.
\end{theorem}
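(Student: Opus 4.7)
The plan is to prove that $\theta^\sharp$ is a Killing vector field in both cases; since $d\theta=0$ makes $\nabla\theta$ a symmetric $(0,2)$-tensor, the Killing condition (which would make the same tensor skew) then forces $\nabla\theta=0$, which is exactly the Vaisman condition. The strategy splits naturally into a formal step, which gives the analogue statement for $J\theta^\sharp$ regardless of (i) or (ii), and a harder geometric step where the extra hypothesis enters.

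The first step uses only the standing hypothesis and shows that the anti-Lee field $J\theta^\sharp$ is always Killing. Cartan's formula gives
$$\mathcal{L}_{J\theta^\sharp}\omega = d(\iota_{J\theta^\sharp}\omega) + \iota_{J\theta^\sharp}(\theta\wedge\omega),$$
and since $\iota_{J\theta^\sharp}\omega=-\theta$, $\theta(J\theta^\sharp)=0$, and $d\theta=0$, both terms on the right vanish. Combined with the fact that $J\theta^\sharp$ is holomorphic (which follows from $\mathcal{L}_{\theta^\sharp}J=0$ plus integrability of $J$), the identity $(\mathcal{L}_X\omega)(Y,Z)=(\mathcal{L}_Xg)(JY,Z)$, valid for any holomorphic $X$, forces $\mathcal{L}_{J\theta^\sharp}g=0$. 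In particular $\delta(J\theta)=0$ and $[\theta^\sharp,J\theta^\sharp]=0$.

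The second step reduces the Vaisman condition to an $L^2$-identity. The same Cartan formula yields
$$\mathcal{L}_{\theta^\sharp}\omega = d(J\theta) + |\theta|^2\omega - \theta\wedge J\theta.$$
Using $\nabla_{\theta^\sharp}\omega=0$ (a direct Koszul-type computation valid on any lcK manifold) together with the facts that $S:=\nabla\theta^\sharp$ is symmetric (from $d\theta=0$) and commutes with $J$ (from holomorphicity of $\theta^\sharp$), one obtains $(\mathcal{L}_{\theta^\sharp}\omega)(Y,Z)=2g(JSY,Z)$ and therefore the pointwise identity $|\mathcal{L}_{\theta^\sharp}\omega|^2=2|\nabla\theta|^2$. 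Thus Vaisman is equivalent to $\int_M|\mathcal{L}_{\theta^\sharp}\omega|^2\,dv_g=0$. I would expand this integral using the standard lcK formulas $\delta\omega=-(n-1)J\theta$ and $\Lambda\,d(J\theta)=-\delta\theta-(n-1)|\theta|^2$, and then observe that either assumption (i) or (ii) kills the cross term $\int_M|\theta|^2\delta\theta\,dv_g$ (in case (i) because $|\theta|^2$ is constant, in case (ii) directly, in both cases after integration by parts on the partner term $\int\theta^\sharp(|\theta|^2)$).

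The main obstacle is the final step: the expanded $L^2$ expression must be compared to the Bochner-Weitzenb\"ock identity $\int|\nabla\theta|^2=\|\delta\theta\|^2-\int\mathrm{Ric}(\theta^\sharp,\theta^\sharp)$ (valid for any closed $1$-form) and to the standard Bochner identity $\int|d(J\theta)|^2 = 2\int\mathrm{Ric}(J\theta^\sharp,J\theta^\sharp)$ (valid for the Killing field $J\theta^\sharp$ produced in Step~1). Extracting from this system the sign constraint needed to force $\int|\nabla\theta|^2=0$ requires an lcK-specific relation tying $\mathrm{Ric}(\theta^\sharp,\theta^\sharp)$ to $\mathrm{Ric}(J\theta^\sharp,J\theta^\sharp)$, equivalently a precise formula for the Riemannian Ricci in the Lee direction in terms of the Chern--Ricci form and $|\theta|^2$. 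This lcK-specific input, rather than the formal manipulations of the first two steps, is what genuinely distinguishes the non-Vaisman examples from the Vaisman ones, and is where the hypothesis (i) or (ii) does real work.
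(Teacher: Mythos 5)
Your first two steps are correct, and they essentially reproduce the paper's own starting point: Step 1 is Lemma \ref{unu}, and the identities in Step 2 --- $\Ll_T\omega=dJ\theta+|\theta|^2\omega-\theta\wedge J\theta$ and $\Ll_T\omega=2\omega(F\cdot,\cdot)$ with $F=\nabla T$ symmetric and commuting with $J$, whence $|\Ll_T\omega|^2=2|\nabla\theta|^2$ --- are the paper's equations \eqref{e3} and \eqref{e4} combined with Lemma \ref{T}. The genuine gap is that your proof stops exactly where the theorem begins: the sign argument forcing $\nabla\theta=0$ is never carried out. You defer it to an unstated ``lcK-specific relation tying $\operatorname{Ric}(\theta^\sharp,\theta^\sharp)$ to $\operatorname{Ric}(J\theta^\sharp,J\theta^\sharp)$''; no such relation is formulated, let alone proved, and nothing in your setup explains how hypothesis (i) or (ii) would enter it. As written, this is a plan with the decisive step missing, not a proof.

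Moreover, the purely integral ($L^2$/Bochner) strategy you outline is structurally too weak for case (i). Under $|\theta|=\const$, the natural integral identity one obtains --- for instance by integrating the paper's pointwise identity \eqref{cinci}, which is the sharpest output your bookkeeping could hope to reproduce --- is $2\int_M|\nabla\theta|^2\vol_g=\int_M(\delta\theta)^2\vol_g$, and this carries no contradiction: Cauchy--Schwarz only gives $|\nabla\theta|^2\geq(\delta\theta)^2/(2n)$, which is compatible with that identity for every $n$. The paper's case (i) is genuinely pointwise: holomorphy of $T$ is used a \emph{second} time, at second order, via $\Ll_T(J\theta)=Jd|\theta|^2$, giving $dJd|\theta|^2=\Ll_T(dJ\theta)$ (Proposition \ref{trei}); applying the trace $\Tr_\omega$ (Lemma \ref{patru}) yields the scalar equation \eqref{cinci}, $\Delta|\theta|^2+T(|\theta|^2)+|\theta|^2\delta\theta+2|\nabla\theta|^2-T(\delta\theta)=0$, which is then evaluated at a maximum point $x_0$ of $\delta\theta$: there $T(\delta\theta)(x_0)=0$ and $(\delta\theta)(x_0)\geq 0$ (since $\int_M\delta\theta\,\vol_g=0$), forcing $(\delta\theta)(x_0)=0$, hence $\delta\theta\leq 0$ everywhere, hence $\delta\theta\equiv 0$, and then \eqref{cinci} gives $\nabla\theta=0$. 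Note that no curvature appears anywhere in the paper's argument: the ingredient you are missing is not a Ricci identity but this differentiated use of holomorphy, which your proposal never exploits (you use $\Ll_TJ=0$ only algebraically, through $[F,J]=0$). Case (ii) does follow by simple integration --- but only once \eqref{cinci} is in hand, which is precisely the step your proposal lacks.
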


As shown in Section 3, this result does not hold in general for holomorphic Lee vector fields $\theta^\sharp$  without imposing some additional hypotheses like (i) or (ii).

Very recently, Nicolina Istrati has found another instance where the conclusion of Theorem \ref{main} holds:

\begin{proposition*}[Istrati \cite{ni}] Let $(M,J,g,\theta)$ be a compact locally conformally K\"ahler manifold with holomorphic Lee field. Suppose that the metric $g$ has a potential, i.e., $\omega=\theta\wedge J\theta-dJ\theta$ (cf. \cite{ov_lckpot}).
Then $(M,J,g)$ is Vaisman.
\end{proposition*}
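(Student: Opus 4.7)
My plan is to reduce Istrati's statement to Theorem \ref{main}(i) by showing that the potential identity forces $|\theta|$ to be constant. Set $T:=\theta^\sharp$. The convention $\omega(\cdot,\cdot)=g(J\cdot,\cdot)$ gives $\iota_T\omega = J\theta$. By Cartan's formula and the lcK identity $d\omega=\theta\wedge\omega$,
\[
\mathcal{L}_T\omega = d(J\theta) + \iota_T(\theta\wedge\omega) = dJ\theta + |\theta|^2\omega - \theta\wedge J\theta.
\]
The potential identity, rewritten as $dJ\theta = \theta\wedge J\theta - \omega$, is precisely what is needed to cancel the first and third terms, leaving $\mathcal{L}_T\omega = (|\theta|^2 - 1)\omega$. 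Since $T$ is holomorphic, $\mathcal{L}_TJ = 0$, and hence $(\mathcal{L}_Tg)(JX,Y) = (\mathcal{L}_T\omega)(X,Y)$, so $\mathcal{L}_Tg = (|\theta|^2-1)g$. Thus $T$ is a conformal vector field with conformal factor $|\theta|^2 - 1$.

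Because $d\theta = 0$, the $(0,2)$-tensor $(X,Y)\mapsto g(\nabla_XT,Y)$ is symmetric, so the conformal identity sharpens to $\nabla_XT = \tfrac12(|\theta|^2-1)X$. Two immediate consequences are
\[
\nabla|\theta|^2 = (|\theta|^2-1)T,\qquad \operatorname{div} T = n(|\theta|^2 - 1),
\]
where $2n=\dim_{\mathbb{R}} M$. Integrating $\operatorname{div} T$ over the compact manifold $M$ yields $\int_M|\theta|^2 = \vol(M)$.

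To finish, I would integrate the identity $\operatorname{div}(\nabla|\theta|^2)=0$. Expanding,
\[
\operatorname{div}\bigl((|\theta|^2-1)T\bigr) = n(|\theta|^2-1)^2 + (|\theta|^2-1)|\theta|^2,
\]
and using the normalization just obtained, each integrated summand equals $\int_M|\theta|^4 - \vol(M)$, so $(n+1)\bigl(\int_M|\theta|^4 - \vol(M)\bigr)=0$. Combined with $\int_M|\theta|^2=\vol(M)$, this saturates the Cauchy--Schwarz inequality $(\int_M|\theta|^2)^2\le \vol(M)\int_M|\theta|^4$, forcing $|\theta|$ to be constant (equal to $1$). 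Theorem \ref{main}(i) then concludes that $(M,J,g)$ is Vaisman.

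The computation is essentially rigid; the only real input from the potential hypothesis is the cancellation that produces $\mathcal{L}_T\omega = (|\theta|^2-1)\omega$. I expect the sole delicate point to be keeping track of sign conventions in the identification $\iota_T\omega=J\theta$, with no deeper obstacle.
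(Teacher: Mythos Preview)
The paper does not actually prove Istrati's proposition; it merely states the result and cites \cite{ni}. So there is no ``paper's own proof'' to compare against.

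Your argument is correct. The key observation that the potential identity $\omega=\theta\wedge J\theta-dJ\theta$ combined with holomorphy of $T$ yields $\mathcal{L}_T\omega=(|\theta|^2-1)\omega$, hence $\nabla_XT=\tfrac12(|\theta|^2-1)X$, is clean and does all the work. One small simplification: after you obtain
\[
\int_M\bigl(n(|\theta|^2-1)^2+(|\theta|^2-1)|\theta|^2\bigr)\,\vol_g=0,
\]
you can rewrite $(|\theta|^2-1)|\theta|^2=(|\theta|^2-1)^2+(|\theta|^2-1)$ and use $\int_M(|\theta|^2-1)=0$ directly, giving $(n+1)\int_M(|\theta|^2-1)^2=0$ and hence $|\theta|^2\equiv 1$ without invoking Cauchy--Schwarz. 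Either way, Theorem~\ref{main}(i) finishes the proof.
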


\section{Proof of Theorem \ref{main}}
\subsection{Local formulae}

Consider the Lee vector field $T:=\theta^\sharp$. We start by proving two straightforward results.

\begin{lemma}\label{unu} If $T$ is holomorphic, then $JT$ is both holomorphic and Killing.
\end{lemma}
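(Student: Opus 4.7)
The plan is to establish the two properties of $JT$ separately, both being quick consequences of the lcK equation and the integrability of $J$. For the holomorphicity of $JT$, I rely on a standard identity valid on any almost complex manifold: for every vector field $X$,
\[ \mathcal{L}_{JX}J - J\,\mathcal{L}_X J = N_J(X,\cdot), \]
where $N_J$ is the Nijenhuis tensor. Since $J$ is integrable (the manifold is Hermitian), $N_J \equiv 0$, and the assumption $\mathcal{L}_T J = 0$ immediately yields $\mathcal{L}_{JT}J = 0$.

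Next I would show that $JT$ preserves the fundamental form $\omega$ by applying Cartan's formula:
\[ \mathcal{L}_{JT}\omega = d(\iota_{JT}\omega) + \iota_{JT}(\theta\wedge\omega). \]
The contraction is $\iota_{JT}\omega = \omega(JT,\cdot) = g(J^2 T,\cdot) = -\theta$, and since $d\theta = 0$ by the lcK hypothesis, the first term vanishes. The second term expands as
\[ \iota_{JT}(\theta\wedge\omega) = \theta(JT)\,\omega - \theta\wedge\iota_{JT}\omega = \theta(JT)\,\omega + \theta\wedge\theta, \]
and both summands are zero: $\theta\wedge\theta$ trivially, and $\theta(JT) = g(T,JT) = 0$ by skew-adjointness of $J$ with respect to $g$. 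Hence $\mathcal{L}_{JT}\omega = 0$.

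Finally I combine the two results. Differentiating the identity $\omega(Y,Z) = g(JY,Z)$ along $JT$ gives
\[ (\mathcal{L}_{JT}\omega)(Y,Z) = (\mathcal{L}_{JT}g)(JY,Z) + g\bigl((\mathcal{L}_{JT}J)Y,Z\bigr). \]
The last term vanishes by the first step, so $(\mathcal{L}_{JT}g)(JY,Z) = 0$ for all $Y,Z$, and the invertibility of $J$ forces $\mathcal{L}_{JT}g = 0$, i.e.\ $JT$ is Killing. There is really no obstacle here: the whole argument rests only on $d\theta=0$, the integrability of $J$, and the orthogonality $T\perp JT$. The subtlety, such as it is, lies in keeping track of signs in the conventions $\omega(\cdot,\cdot)=g(J\cdot,\cdot)$ and in the Cartan computation.
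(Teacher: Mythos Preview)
Your proof is correct and follows essentially the same approach as the paper's: both first deduce $\mathcal{L}_{JT}J=0$ from integrability plus $\mathcal{L}_T J=0$, then use Cartan's formula together with $d\theta=0$ and $\theta(JT)=0$ to get $\mathcal{L}_{JT}\omega=0$, and finally combine these to conclude $\mathcal{L}_{JT}g=0$. Your write-up simply makes explicit the Nijenhuis identity and the passage from $\mathcal{L}_{JT}\omega=0$ to $\mathcal{L}_{JT}g=0$ that the paper leaves implicit.
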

\begin{proof}
Since $J$ is integrable and $T$ is holomorphic, $JT$ must also be holomorphic. On any lcK manifold, Cartan's formula yields:
\begin{equation*}
\begin{split}
\Ll_{JT}\omega&=d(JT\cntrct \omega)+JT\cntrct(\theta\wedge\omega)\\
&=-d\theta+(\theta\wedge\theta)=0.
\end{split}
\end{equation*}
Combined with $\Ll_{JT}J=0$, this gives $\Ll_{JT}g=0$.
\end{proof}
\begin{lemma}\label{T}
The Lee vector field $T$ is holomorphic if and only if the symmetric endomorphism $F:=\nabla T\in \End(TM)$ commutes with $J$.
\end{lemma}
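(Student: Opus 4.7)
The plan is to relate the Lie derivative $\Ll_T J$ to the endomorphism $F$ and to $\nabla_T J$, then to observe that $\nabla_T J$ vanishes on any lcK manifold; the statement will then reduce to the equivalence between $\Ll_T J = 0$ and $[J,F]=0$.

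First, I would establish the identity, valid on any Hermitian manifold,
\[
(\Ll_T J)(X) \;=\; [T, JX] - J[T, X] \;=\; (\nabla_T J)(X) + [J, F](X)
\]
for every vector field $X$, where $[J,F] := JF - FJ$. This is obtained by expanding both Lie brackets via the torsion-freeness of the Levi-Civita connection, using $[T,Y] = \nabla_T Y - \nabla_Y T = \nabla_T Y - FY$, together with the Leibniz rule $\nabla_T(JX) = (\nabla_T J)X + J\nabla_T X$; the two $J\nabla_T X$ contributions cancel, and one is left with $(\nabla_T J)(X) + (JF - FJ)(X)$.

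Next, I would show that $\nabla_T J = 0$ on any lcK manifold. The cleanest route uses the classical Hermitian identity
\[
2 g\bigl((\nabla_X J) Y, Z\bigr) \;=\; d\omega(X, Y, Z) - d\omega(X, JY, JZ),
\]
which holds because the Nijenhuis tensor of $J$ vanishes. Substituting $X = T$ and $d\omega = \theta \wedge \omega$, and using $\omega(T, W) = g(JT, W) = -\theta(JW)$ together with the $J$-invariance $\omega(JY, JZ) = \omega(Y, Z)$, the terms on the right-hand side cancel pairwise, so $\nabla_T J = 0$. An alternative derivation compares $\nabla$ with the Levi-Civita connection of a local K\"ahler metric $\tilde g = e^{-f}g$ (where $df = \theta$), yielding the explicit formula $(\nabla_X J)Y = \tfrac12\bigl(\theta(JY) X - \theta(Y) JX + g(X,Y) JT + \omega(X,Y) T\bigr)$, which visibly vanishes when $X = T$.

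Combining the two steps gives $\Ll_T J = [J, F]$, whence $T$ is holomorphic if and only if $F$ commutes with $J$. The only substantive point is the vanishing of $\nabla_T J$; once this is isolated, the equivalence is purely formal.
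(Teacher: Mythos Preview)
Your proof is correct and follows the same line as the paper's: express $(\Ll_T J)(X)$ via the Levi--Civita connection to obtain $(\nabla_T J)(X) + [J,F](X)$, then kill the first term by the lcK identity for $\nabla J$. The only minor variation is that the paper invokes the explicit formula $\nabla_X J = \tfrac12(X\wedge J\theta + JX\wedge\theta)$ directly (your ``alternative derivation''), whereas your primary route to $\nabla_T J = 0$ goes through the Hermitian identity $2g((\nabla_X J)Y,Z)=d\omega(X,Y,Z)-d\omega(X,JY,JZ)$; both are equivalent here.
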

\begin{proof} On any lcK manifold one has the well-known identity (see e.g.\ \cite[Cor.\ 1.1]{do})
\begin{equation}\label{naj}
\nabla_XJ=\tfrac{1}{2}(X\wedge J\theta+JX\wedge\theta).
\end{equation}
In particular, $\nabla_T J=\tfrac{1}{2}(T\wedge J\theta+JT\wedge\theta)=0$.
We thus get for every vector field $X$ on $M$:
\begin{equation*}
\begin{split}
(\Ll_TJ)(X)&=[T,JX]-J[T,X]=\nabla_T JX-\nabla_{JX}T-J\nabla_TX+J\nabla_XT\\
&=(\nabla_TJ)(X)-\nabla_{JX}T+J\nabla_XT=-F(JX)+J(FX),
\end{split}
\end{equation*}
thus proving our claim.
\end{proof}

For later use, we remark that \eqref{naj} yields for every local orthonormal frame $\{e_i\}$:
\begin{align}\label{cgnt}
\sum_i(\nabla_{e_i}J)(e_i)=(n-1)JT,&&
\sum_i(\nabla_{Je_i}J)(e_i)=-(n-1)T.
\end{align}

Since $\theta$ is closed, the symmetric bilinear form defined as $S(X,Y):=(\nabla_X\theta)(Y)=g(FX,Y)$ measures the failure of $T$ to being Killing:
\begin{equation}\label{doi}
\Ll_Tg=2S.
\end{equation}

Note that by Lemma \ref{T}, $T$ is holomorphic if and only if $S$ is of type $(1,1)$, i.e. $(\nabla\theta)^{2,0}+(\nabla\theta)^{0,2}=0$. In fact, this is precisely the complementary condition to the lcK structure being pluricanonical, which reads 
$(\nabla\theta)^{1,1}=0$, see \cite{mm}.

We now derive a formula which is the main tool for proving Theorem \ref{main}:

\begin{proposition}\label{trei} If $T$ is holomorphic, then:
\begin{equation}\label{djd}
\begin{split}
dJd|\theta|^2={}&4\omega(F^2\cdot,\cdot)+2\omega(\Ll_TF\cdot,\cdot)-T(|\theta|^2)\omega
-2|\theta|^2\omega(F\cdot,\cdot)\\
&+d|\theta|^2\wedge J\theta +\theta\wedge Jd|\theta|^2.
\end{split}
\end{equation}
\end{proposition}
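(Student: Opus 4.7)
The plan is to derive a preliminary identity for $dJ\theta$ via Cartan's formula and then apply $\Ll_T$ to both sides.

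First, since $T\cntrct\omega=\omega(T,\cdot)=g(JT,\cdot)=J\theta$ and $\theta(T)=|\theta|^2$, Cartan's formula combined with $d\omega=\theta\wedge\omega$ yields
\[\Ll_T\omega = d(T\cntrct\omega)+T\cntrct(\theta\wedge\omega)= dJ\theta+|\theta|^2\omega-\theta\wedge J\theta.\]
Separately, writing $\omega=g(J\cdot,\cdot)$ and using the Leibniz rule together with $\Ll_Tg=2S$ (from \eqref{doi}), $\Ll_TJ=0$ (from Lemma~\ref{T}), and $[F,J]=0$, one gets $\Ll_T\omega=2S(J\cdot,\cdot)=2\omega(F\cdot,\cdot)$. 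Equating the two expressions for $\Ll_T\omega$ produces the intermediate identity
\[dJ\theta=2\omega(F\cdot,\cdot)-|\theta|^2\omega+\theta\wedge J\theta.\]

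Next, I apply $\Ll_T$ to both sides. Because $\Ll_T$ commutes with $d$ (always) and with $J$ (since $T$ is holomorphic), and because $\Ll_T\theta=d(T\cntrct\theta)+T\cntrct d\theta=d|\theta|^2$, the left-hand side becomes $\Ll_T(dJ\theta)=d(J\Ll_T\theta)=dJd|\theta|^2$. For the right-hand side, the key observation is that for any endomorphism $A$ of $TM$,
\[\Ll_T[g(A\cdot,\cdot)]=(\Ll_Tg)(A\cdot,\cdot)+g((\Ll_TA)\cdot,\cdot)=2g(FA\cdot,\cdot)+g((\Ll_TA)\cdot,\cdot);\]
applied to $A=JF$ (and using $[F,J]=0$, $\Ll_TJ=0$) this gives $\Ll_T[\omega(F\cdot,\cdot)]=2\omega(F^2\cdot,\cdot)+\omega(\Ll_TF\cdot,\cdot)$. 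The other two terms are handled by direct Leibniz: $\Ll_T(|\theta|^2\omega)=T(|\theta|^2)\omega+2|\theta|^2\omega(F\cdot,\cdot)$, and $\Ll_T(\theta\wedge J\theta)=d|\theta|^2\wedge J\theta+\theta\wedge Jd|\theta|^2$. Assembling the three contributions reproduces precisely \eqref{djd}.

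The main obstacle is recognizing that $\Ll_T$ of the Cartan-type identity for $dJ\theta$ gives exactly $dJd|\theta|^2$ on the left, via $\Ll_T\theta=d|\theta|^2$; once this is spotted, every term on the right is extracted by a routine Leibniz computation, with the commutation relations $[F,J]=0$ and $\Ll_TJ=0$ absorbing all the complex-structure bookkeeping.
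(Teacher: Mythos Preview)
Your proof is correct and follows essentially the same route as the paper: both derive the intermediate identity $dJ\theta=2\omega(F\cdot,\cdot)-|\theta|^2\omega+\theta\wedge J\theta$ from Cartan's formula combined with $\Ll_T\omega=2\omega(F\cdot,\cdot)$, and then apply $\Ll_T$ to both sides using $\Ll_T(dJ\theta)=dJd|\theta|^2$. Your write-up is in fact slightly more explicit than the paper's, which simply says ``taking \eqref{e3} and \eqref{e4} into account completes the proof'' without spelling out the three Leibniz computations you carry out.
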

\begin{proof} 
By \eqref{doi} one has:
\begin{equation}\label{e3}
\Ll_T\omega=\Ll_T(g(J\cdot,\cdot))=2S(J\cdot,\cdot)=2\omega(F\cdot,\cdot).
\end{equation}
Using this we compute:
\begin{equation}\label{e4}
\begin{split}
d(J\theta)&=d(T\cntrct\omega)=\Ll_T\omega-T\cntrct d\omega\\
&=2\omega(F\cdot,\cdot)-|\theta|^2\omega+\theta\wedge J\theta.
\end{split}
\end{equation}
 Now, as $T$ is holomorphic, Cartan's formula implies:
$$\Ll_T(J\theta)=J\Ll_T\theta=J(d(T\cntrct\theta)+T\cntrct d\theta)=Jd|\theta|^2,$$
whence
$$dJd|\theta|^2=d\Ll_T(J\theta)=\Ll_T(d(J\theta)),$$
which taking \eqref{e3} and \eqref{e4} into account completes the proof.
\end{proof}

 Define now the trace of a two-form $\eta$ with respect to $\omega$ by
$$\Tr_\omega\eta:=\sum_i\eta(e_i, Je_i),$$
where $\{e_i\}_{1\leq i\leq 2n}$ is a local orthonormal frame (of course, the trace does not depend on the choice of the frame). The properties of $\Tr_\omega$ are summarized in:

\begin{lemma}\label{patru}
\begin{enumerate}
\item[(i)] If $A\in \End(TM)$ is the endomorphism associated a $2$-form $\eta$ by the requirement 
$\eta=\omega(A\cdot,\cdot)$, then
\[\Tr_\omega\eta=\Tr A:=\sum_i g(Ae_i, e_i).\]
\item[(ii)] If $\al$ and $\be$ are $1$-forms, then
\[\Tr_\omega(\al\wedge\be)=2g(J\al,\be).\]
\item[(iii)] If $f$ is a smooth function, then:
\[\Tr_\omega(dJdf)=-2\Delta f + 2(1-n)T(f).\]
\end{enumerate}
\end{lemma}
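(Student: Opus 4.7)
The plan is to verify the three assertions separately, in increasing order of difficulty. Parts (i) and (ii) are essentially linear-algebraic, while (iii) requires a small covariant computation.

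For (i), writing out $\eta(X,Y)=\omega(AX,Y)=g(JAX,Y)$ and using that $J$ is a $g$-isometry gives $\eta(e_i,Je_i)=g(JAe_i,Je_i)=g(Ae_i,e_i)$; summing yields the result. For (ii), I would expand
\[\Tr_\omega(\al\wedge\be)=\sum_i\bigl[\al(e_i)\be(Je_i)-\al(Je_i)\be(e_i)\bigr]\]
and use the standard identity $\sum_i\al(e_i)e_i=\al^\sharp$ together with the linearity of $J$ to rewrite $\sum_i\al(e_i)Je_i=J\al^\sharp$. Each of the two sums then becomes $\pm g(J\al,\be)$, and the two contributions add rather than cancel because $g(J\cdot,\cdot)$ is skew-symmetric.

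For (iii), the key is to rewrite $Jdf=(J\nabla f)^\flat$ and apply the formula $dV^\flat(X,Y)=g(\nabla_XV,Y)-g(\nabla_YV,X)$ (valid for any $V$, since $\nabla$ is torsion-free) together with the Leibniz rule $\nabla_X(J\nabla f)=(\nabla_XJ)(\nabla f)+J\,\mathrm{Hess}(f)(X)$. This splits $dJdf(X,Y)$ into a \emph{Hessian part} $g(JHX,Y)-g(JHY,X)$, where $H=\nabla\nabla f$, and a \emph{twisted part} involving $(\nabla_XJ)\nabla f$ and $(\nabla_YJ)\nabla f$. Taking the $\omega$-trace, the Hessian contribution collapses (using $J$'s orthogonality and the symmetry of $H$) to $2\Tr H=-2\Delta f$ with the Hodge sign convention. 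For the twisted part, I would expand $(\nabla_{e_i}J)\nabla f$ using \eqref{naj} and sum using $\sum_ig(\nabla f,e_i)e_i=\nabla f$ along with the identities \eqref{cgnt}; a short bookkeeping shows that the $\sum_ig((\nabla_{e_i}J)\nabla f,Je_i)$ term contributes $(1-n)T(f)$ and the $-\sum_ig((\nabla_{Je_i}J)\nabla f,e_i)$ term another $(1-n)T(f)$, giving $2(1-n)T(f)$.

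The main obstacle will be the sign bookkeeping in (iii): correctly identifying which contractions of $\nabla J$ reduce to $JT$ versus $T$ and ensuring that the Laplacian appears with the geometer's sign. Once the split into Hessian plus twist is made, each piece is a routine trace of a few elementary tensors, and the only real input beyond linear algebra is formula \eqref{cgnt} applied to the relevant sums.
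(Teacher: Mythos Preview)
Your proof is correct and follows essentially the same route as the paper. For (i) and (ii) the paper simply declares them ``straightforward,'' and for (iii) it carries out the same frame computation using \eqref{cgnt}; the only organizational difference is that the paper applies the Cartan formula $d\eta(X,Y)=X(\eta(Y))-Y(\eta(X))-\eta([X,Y])$ in a frame parallel at a point (so that $[e_i,Je_i]$ reduces to $(\nabla_{e_i}J)e_i$, and the $Je_i$-second-derivative term picks up the correction $\sum_i(\nabla_{Je_i}Je_i)(f)$), whereas you work covariantly from $dV^\flat(X,Y)=g(\nabla_XV,Y)-g(\nabla_YV,X)$ and split cleanly into Hessian plus twist---both paths land on the same two applications of \eqref{cgnt}.
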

\begin{proof} The statements (i) and (ii) are straightforward. In order to prove (iii), we use \eqref{cgnt} and write in a local orthonormal frame parallel at a point where the computation is done:
\begin{align*}
\Tr_\omega(dJdf)&=\sum_ie_i(Jdf(Je_i))-\sum_i Je_i(Jdf(e_i))-\sum_i Jdf([e_i, Je_i])\\
&=\sum_i e_i(e_i(f))+Je_i(Je_i(f))- Jdf\left(\sum_i(\nabla_{e_i}J)e_i\right)\\
&=-\Delta f-\Delta f+\sum _i(\nabla_{Je_i}Je_i)(f)-Jdf((n-1)JT)\\
&=-2\Delta f + 2(1-n)T(f).
\end{align*}
\end{proof}
We now apply $\Tr_\omega$ to both sides of equation \eqref{djd} using Lemma \ref{patru} 
with $f=|\theta|^2$ and $A=F$:
\begin{align*}
-2\Delta |\theta|^2 + 2(1-n)T(|\theta|^2)={}&4\Tr F^2+2T(\Tr F)-2nT(|\theta|^2)\\
&-2|\theta|^2\Tr F+2T(|\theta|^2)+2T(|\theta|^2).
\end{align*}
But $\Tr F=-\delta \theta$ and $\Tr F^2=|\nabla \theta|^2$, hence we obtain:
\begin{equation}\label{cinci}
\Delta|\theta|^2+T(|\theta|^2)+|\theta|^2\delta\theta+2|\nabla \theta|^2-T(\delta\theta)=0.
\end{equation}

\subsection{Global consequences}
From now on, suppose $M$ is compact and that one of the assumptions below holds:
\begin{enumerate}
\item[{\bf A.}] Assume that $|\theta|$ is constant. Let then $x_0$ be a point where $\delta\theta$ attains its maximum. As $\int_M \delta\theta \vol_g=0$, we see that $(\delta\theta)(x_0)\geq 0$.

On the other hand, the first and second terms in \eqref{cinci} vanish because of the assumption $|\theta|=\const$, and $T(\delta\theta)(x_0)=0$ since $x_0$ is an extremum of $\delta\theta$. Then \eqref{cinci} implies:
$$|\theta|^2(x_0)(\delta\theta)(x_0)+2|\nabla \theta|^2(x_0)=0.$$
As both terms are positive, we have, in particular $(\delta\theta)(x_0)=0$ and so $\delta\theta\leq 0$. Since the integral of $\delta\theta$ vanishes, this means that $\delta\theta$ vanishes identically on $M$. Finally, \eqref{cinci} now reduces to $\nabla\theta=0$, i.e., $g$ has parallel Lee form, and Theorem \ref{main}(i) is proven.
\item[{\bf B.}]  Assume that $\delta\theta=0$. We integrate \eqref{cinci} on $M$:
\begin{equation*}
\begin{split}
0&=\int_M (\Delta|\theta|^2+T(|\theta|^2)+2|\nabla \theta|^2)\vol_g\\
&=\int_M (|\theta|^2\delta\theta +2|\nabla \theta|^2)\vol_g\\
&=2\int_M |\nabla \theta|^2\vol_g,
\end{split}
\end{equation*}
which again implies $\nabla\theta=0$ and proves Theorem \ref{main}(ii).
\end{enumerate}

\section{Counterexample under weaker hypotheses}

We now show that the conclusion of Theorem \ref{main} does not hold in general under the unique assumption that the Lee field is holomorphic. For this, consider a (compact) Vaisman manifold 
$(M,g,J,\omega, \theta)$ with
\begin{align*}
\omega=g(J\cdot,\cdot),&&d\omega=\theta\wedge\omega,&&|\theta|=1.
\end{align*}

\begin{lemma}\label{lema}Let $f\in C^\infty(M)$ be a non-constant function whose gradient is collinear to $T$, or equivalently
\begin{equation}\label{dft}
df\wedge\theta=0, 
\end{equation}
and such that $f>-1$. Define 
\begin{align}\label{eoO}
\oOmega:=\omega+f\theta\wedge J\theta, &&\og:=\oOmega(\cdot,J\cdot).
\end{align}
Then $(M,\bar g,J,\bar\omega)$ is lcK, with Lee form $\otheta=(1+f)\theta$ and Lee vector field $\bar T=T$.
\end{lemma}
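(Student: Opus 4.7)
The plan is to verify directly the three properties that characterize an lcK structure with the claimed Lee data: that $(\bar g, J, \bar\omega)$ is a Hermitian structure, that $d\bar\omega = \bar\theta \wedge \bar\omega$ with $\bar\theta = (1+f)\theta$ closed, and that the $\bar g$-dual of $\bar\theta$ is $T$. First I would unpack \eqref{eoO} to get the explicit expression
\[\bar g = g + f\bigl(\theta\otimes\theta + J\theta\otimes J\theta\bigr),\]
which is manifestly symmetric and $J$-invariant. The perturbation modifies the metric only on the $2$-plane spanned by $T$ and $JT$, where the eigenvalue $1$ is replaced by $1+f$; hence the condition $f>-1$ is exactly what is needed for $\bar g$ to be positive-definite.

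Next, I would differentiate $\bar\omega$. Since $d\theta = 0$ and \eqref{dft} gives $df\wedge\theta = 0$, only one Leibniz term survives:
\[d\bar\omega = d\omega - f\,\theta\wedge d(J\theta) = \theta\wedge\omega - f\,\theta\wedge d(J\theta).\]
Because the starting structure is Vaisman, $\theta$ is parallel and the endomorphism $F = \nabla T$ vanishes; combined with $|\theta|=1$, the identity \eqref{e4} collapses to $d(J\theta) = -\omega + \theta\wedge J\theta$. Substituting and using $\theta\wedge\theta = 0$ yields $d\bar\omega = (1+f)\,\theta\wedge\omega$. Since likewise $\theta\wedge\bar\omega = \theta\wedge\omega$, this reads $d\bar\omega = \bar\theta\wedge\bar\omega$ with $\bar\theta := (1+f)\theta$, and $\bar\theta$ is closed thanks once more to \eqref{dft}.

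Finally, to identify the Lee vector field I would compute $\bar g(T,\cdot)$ from the explicit formula above. Since $\theta(T) = |\theta|^2 = 1$ and $J\theta(T) = g(JT,T) = 0$, only the $\theta\otimes\theta$ term contributes, giving $\bar g(T,X) = (1+f)\theta(X) = \bar\theta(X)$, so $\bar T = T$. The argument is essentially a bookkeeping exercise; there is no serious obstacle. The single substantive input is the Vaisman hypothesis, which is used precisely to simplify $d(J\theta)$ via $F=0$ and $|\theta|=1$; the hypothesis \eqref{dft} then kills every cross term that would otherwise spoil the conformal identity.
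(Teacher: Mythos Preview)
Your argument is correct and follows essentially the same route as the paper's proof: both hinge on the Vaisman identity $d(J\theta)=-\omega+\theta\wedge J\theta$ (which the paper derives from $\Ll_T\omega=0$ rather than by specializing \eqref{e4}) together with $df\wedge\theta=0$ to obtain $d\bar\omega=(1+f)\,\theta\wedge\bar\omega$. The only cosmetic difference is that the paper identifies $\bar T$ by contracting $T$ into $\bar\omega$ instead of computing $\bar g(T,\cdot)$, which is of course equivalent.
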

\begin{proof}
Recall that on a Vaisman manifold the Lee vector field is both Killing and holomorphic, thus
$\Ll_T\omega=0$. This can be rewritten as
\[0=dJ\theta+T\cntrct(\theta\wedge \omega)=dJ\theta+\omega-\theta\wedge J\theta
\]
or equivalently
\begin{equation}\label{omegavais}
\omega=\theta\wedge J\theta-dJ\theta.
\end{equation}

The condition $f>-1$ ensures that $\og$ is positive-definite. We compute using \eqref{dft} and \eqref{omegavais}:
\begin{align*}
d\oOmega&=\theta\wedge\omega+df\wedge\theta\wedge J\theta-f\theta\wedge dJ\theta\\
&=\theta\wedge\omega+f\theta\wedge\omega\\
&=(1+f)\theta\wedge\oOmega.
\end{align*}
Hence $(M,\bar g,J,\bar\omega)$ is lcK, with Lee form $\otheta=(1+f)\theta$. If we denote by $\oT$ the associated Lee vector field, from \eqref{eoO} we have
\[T\cntrct\oOmega=J\theta+fJ\theta=(1+f)J\theta= J\otheta=\oT\cntrct \oOmega
\]
therefore $\oT=T$ as claimed. 
\end{proof}

Summarizing, the Lee vector field $\oT$ of the lcK manifold $(M,\bar g,J,\bar\omega)$ is holomorphic, however $\og$ is not Vaisman since the norm of $\oT$ is not constant:
\[\og(\oT,\oT)=\otheta(\oT)=(1+f)\theta(T)=1+f.
\]
In conclusion, the holomorphy of the Lee field $T$ alone is not enough for an lcK metric to be Vaisman.

An example of a Vaisman manifold and a function $f$ satisfying the hypotheses of Lemma \ref{lema} can be easily constructed as follows.
Let $M=S^1\times S^{2n-1}$ be the Hopf manifold obtained as the quotient of $\mathbb{C}^n\setminus \{0\}$ by a cyclic group of dilations generated by $x\mapsto ax$ for some $a>1$. The metric $4 r^{-2} g_{\mathbb{C}^n}$ and the standard complex structure on $\mathbb{C}^n$ are invariant by dilations, hence they descend to a Vaisman metric on $M$ with parallel Lee form $\theta =-2dr/r$ of norm $1$. Every non-constant function in the variable $r$ bounded by $1$ in absolute value, and verifying $f(r)=f(ar)$ for every $r$, for instance $f(t)=\tfrac{1}{2}\sin(2\pi\ln(r)/\ln(a))$, projects to a function on $M$ verifying the hypotheses of Lemma \ref{eoO}, and thus induces a non-Vaisman lcK metric on $M$ with holomorphic Lee vector field.
\begin{remark*}
Given any closed form $\theta$ on a connected smooth manifold $M$, there exists a non-constant function $f$ such that $df\wedge\theta=0$  if and only if the line spanned by the cohomology class $[\theta]\in H^1(M,\mathbb{R})$ contains an integer class. 
\end{remark*}



\begin{thebibliography}{100}
\bibitem{do} S.~Dragomir, L.~Ornea, {\it Locally conformally K\"ahler manifolds}, Progress in Math.\ {\bf 55}, Birkh\"auser, 1998.

\bibitem{gau} P. Gauduchon, {\it La 1-forme de torsion d'une vari\'et\'e hermitienne compacte}, Math. Ann. {\bf 267}, 495--518 (1984). 

\bibitem{gmo} P.~Gauduchon, A.~Moroianu, L. Ornea, {\sl Compact homogeneous lcK manifolds are Vaisman}, Math.\ Ann.\ {\bf 361} (2015), 1043--1048.

\bibitem{is} N.~Istrati, {\sl A characterization of toric lcK manifolds}, arXiv:1612.03832

\bibitem{ni} N.~Istrati, {\sl Existence criteria for lcK metrics}, preprint 2017.

\bibitem{mmp} F.~Madani, A. Moroianu, M. Pilca, {\sl On toric locally conformally K\"ahler manifolds}, Ann.\ Global Anal.\ Geom.\ {\bf 51} (4) (2017), 401--417.

\bibitem{mor} A.~Moroianu, {\sl Compact lcK manifolds with parallel vector fields}, Complex manifolds {\bf 2} (2015),  26--33.

\bibitem{mm} A.~Moroianu, S.~Moroianu, {\sl On pluricanonical locally conformally K\"ahler manifolds}, IMRN {\bf 2017} (14) (2017), 4398--4405.

\bibitem{ov_lckpot} L.~Ornea, M.~Verbitsky, {\sl Locally conformal K\"ahler manifolds with potential}, Math.\ Ann.\ {\bf 348} (2010), 25--33.

\bibitem{ov_indam} L.~Ornea, M.~Verbitsky, {\sl Embedding of lcK manifolds with potential into Hopf manifolds using Riesz-Schauder theorem}, ``Complex and Symplectic Geometry'', Springer INdAM series, (2017), 137--148.

\bibitem{pps} H.~Pedersen, Y.~T.~Poon, A.~Swann, {\sl The  Einstein--Weyl equations  in complex and quaternionic geometry}, Differential Geom.\ Appl.\ {\bf 3} (1993), 309--321.

\bibitem{va} I.~Vaisman, {\sl A survey of generalized Hopf manifolds},
Rend.\ Sem.\ Mat.\ Univ.\ Politec.\ Torino 1983, Special Issue (1984), 205--221.

\end{thebibliography}
\end{document}